\newtheorem{theorem}{Theorem}
\theoremstyle{plain}
\newtheorem{corollary}{Corollary}
\newtheorem{definition}{Definition}
\newtheorem{lemma}{Lemma}
\newtheorem{remark}{Remark}
\numberwithin{equation}{section}
\begin{document}
\title[Ostrowski type inequalities]{Ostrowski type Inequalities for $m-$ and 
$\left( \alpha ,m\right) -$geometrically convex functions via
Riemann-Louville Fractional integrals}
\author{Mevl\"{u}t TUN\c{C}}
\address{Kilis 7 Aral\i k University, Faculty of Science and Arts,
Department of Mathematics, Kilis, 79000, Turkey.}
\email{mevluttunc@kilis.edu.tr}
\subjclass[2000]{26D10, 26A15, 26A16, 26A51}
\keywords{Ostrowski's inequality, $m$- and $\left( \alpha ,m\right) $%
-geometrically convex functions . }

\begin{abstract}
In this paper, some new inequalities of Ostrowski type established for the
class of $m-$ and $\left( \alpha ,m\right) -$geometrically convex functions
which are generalizations of geometric convex functions.
\end{abstract}

\maketitle

\section{Introduction}

The following result is known in the literature as Ostrowski's inequality 
\cite{ost}.

\begin{theorem}
\bigskip Let $f:\left[ a,b\right] \rightarrow 
\mathbb{R}
$ be a differentiable mapping on $\left( a,b\right) $ with the property that 
$\left\vert f^{\prime }\left( u\right) \right\vert \leq M$ for all $u\in
\left( a,b\right) .$ Then the following inequality holds:%
\begin{equation}
\left\vert f\left( x\right) -\frac{1}{b-a}\int_{a}^{b}f\left( u\right)
du\right\vert \leq M\left( b-a\right) \left[ \frac{1}{4}+\left( \frac{x-%
\frac{a+b}{2}}{b-a}\right) ^{2}\right]  \label{101}
\end{equation}%
for all $x\in \left[ a,b\right] .$ The constant $1/4$ is best possible in
the sense that it cannot be replaced by a smaller constant.
\end{theorem}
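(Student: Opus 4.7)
The plan is to establish the inequality via the Montgomery identity, which expresses the deviation of $f(x)$ from its mean as a weighted integral of $f'$, and then to bound that integral using the hypothesis $|f'| \leq M$.

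First I would introduce the Peano kernel $P(x,t)$ defined piecewise by $P(x,t) = t - a$ for $t \in [a,x]$ and $P(x,t) = t - b$ for $t \in (x,b]$. Splitting $\int_a^b P(x,t) f'(t)\,dt$ at $t = x$ and integrating each piece by parts, the boundary terms collapse to $(b-a)f(x)$ while the remaining integrals combine to $-\int_a^b f(u)\,du$. Rearranging yields the Montgomery identity
\begin{equation*}
f(x) - \frac{1}{b-a}\int_a^b f(u)\,du = \frac{1}{b-a}\int_a^b P(x,t) f'(t)\,dt.
\end{equation*}

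Next I would take absolute values and apply the bound $|f'(t)| \leq M$, reducing the problem to evaluating $\int_a^b |P(x,t)|\,dt$. Since $P(x,t) \leq 0$ on both subintervals (with the natural convention) or, more transparently, since $|P(x,t)| = t-a$ on $[a,x]$ and $|P(x,t)| = b-t$ on $[x,b]$, direct computation gives
\begin{equation*}
\int_a^b |P(x,t)|\,dt = \frac{(x-a)^2}{2} + \frac{(b-x)^2}{2}.
\end{equation*}
Dividing by $b-a$ and performing the algebraic identity $(x-a)^2 + (b-x)^2 = \tfrac{1}{2}(b-a)^2 + 2\bigl(x - \tfrac{a+b}{2}\bigr)^2$ recasts the bound into the form on the right-hand side of \eqref{101}.

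The only real subtlety is the sharpness claim. For this I would exhibit the extremal function: take $f(u) = u - \tfrac{a+b}{2}$ (so $|f'| = 1 = M$) and evaluate both sides at $x = a$, where the right-hand side attains $M(b-a)/2$ and equality forces the constant to be at least $1/4$. The main obstacle, modest as it is, lies in the careful bookkeeping of the integration by parts at the interior breakpoint $t=x$ to obtain the Montgomery identity cleanly; once that is in hand the remainder is a quadratic completion and a one-line extremizer check.
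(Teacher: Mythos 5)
The paper states this theorem only as a quoted classical result (Ostrowski, 1938) and gives no proof of its own, so there is nothing internal to compare against. Your Montgomery-identity argument is the standard and correct proof: the identity, the kernel integral $\frac{(x-a)^2+(b-x)^2}{2}$, the quadratic completion, and the extremizer $f(u)=u-\frac{a+b}{2}$ at $x=a$ all check out. One small slip: $P(x,t)=t-a$ is nonnegative (not nonpositive) on $[a,x]$, but the absolute values you actually integrate are the right ones, so the computation is unaffected.
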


This inequality gives an upper bound for the approximation of the integral
average $\frac{1}{b-a}\int_{a}^{b}f\left( u\right) du$\ by the value $%
f\left( x\right) $ at point $x\in \left[ a,b\right] $. For recent results
and generalizations concerning Ostrowski's inequality, see \cite{alo}-\cite%
{dra} and the references therein.

The following notations is well known in the literature.

\begin{definition}
\bigskip A function $f:I\rightarrow 
\mathbb{R}
,$ $\emptyset \neq I\subseteq 
\mathbb{R}
,$ where $I$ is a convex set, is said to be convex on $I$ if inequality%
\begin{equation*}
f\left( tx+\left( 1-t\right) y\right) \leq tf\left( x\right) +\left(
1-t\right) f\left( y\right)
\end{equation*}%
holds for all $x,y\in I$ and $t\in \left[ 0,1\right] $.
\end{definition}

In particular in \cite{GH}, Toader introduced the class of $m-$convex
functions as a generalizations of convexity as the following:

\begin{definition}
The function $f:\left[ 0,b\right] \rightarrow 
\mathbb{R}
$\ is said to be $m-$convex, where $m\in \left[ 0,1\right] $, if for every $%
x,y\in \left[ 0,b\right] $\ and $t\in \left[ 0,1\right] $\ we have\qquad\ \
\ \ \ \ \ \ \ \ \ \ \ 
\begin{equation}
f\left( tx+m\left( 1-t\right) y\right) \leq tf\left( x\right) +m\left(
1-t\right) f\left( y\right)  \label{2}
\end{equation}%
\bigskip
\end{definition}

Moreover, in \cite{mih}, Mihe\c{s}an introduced the class of $\left( \alpha
,m\right) -$convex functions as the following:

\begin{definition}
The function $f:\left[ 0,b\right] \rightarrow 
\mathbb{R}
$\ is said to be $\left( \alpha ,m\right) -$convex, where $\left( \alpha
,m\right) \in \left[ 0,1\right] ^{2},$\ if for every $x,y\in \left[ 0,b%
\right] $\ and $t\in \left[ 0,1\right] $\ we have \ \ \ \ \ \ \ \ 
\begin{equation}
f\left( tx+m\left( 1-t\right) y\right) \leq t^{\alpha }f\left( x\right)
+m\left( 1-t^{\alpha }\right) f\left( y\right) .\   \label{3}
\end{equation}%
\bigskip
\end{definition}

In \cite{xi}, Xi \textit{et al}. introduced the class of $m-$ and $\left(
\alpha ,m\right) -$geometrically convex functions as the following:

\begin{definition}
\cite{xi}\label{d}Let $f\left( x\right) $ be a positive function on $[0,b]$
and $m\in (0,1].$ If%
\begin{equation}
f\left( x^{t}y^{m\left( 1-t\right) }\right) \leq \left[ f\left( x\right) %
\right] ^{t}\left[ f\left( y\right) \right] ^{m\left( 1-t\right) }
\label{d1}
\end{equation}%
holds for all $x,y\in \left[ 0,b\right] $\ and $t\in \left[ 0,1\right] ,$
then we say that the function $f\left( x\right) $ is $m-$geometrically
convex on $[0,b]$.
\end{definition}

Obviously, if we set $m=1$ in Definition \ref{d}, then $f$ is just the
ordinary geometrically convex on $\left[ 0,b\right] $.

\begin{definition}
\cite{xi}\label{dd}Let $f\left( x\right) $ be a positive function on $[0,b]$
and $\left( \alpha ,m\right) \in \left( 0,1\right] \times \left( 0,1\right]
. $ If 
\begin{equation}
f\left( x^{t}y^{m\left( 1-t\right) }\right) \leq \left[ f\left( x\right) %
\right] ^{t^{\alpha }}\left[ f\left( y\right) \right] ^{m\left( 1-t^{\alpha
}\right) }  \label{d2}
\end{equation}%
holds for all $x,y\in \lbrack 0,b]$ and $t\in \lbrack 0,1]$, then we say
that the function $f\left( x\right) $ is $\left( \alpha ,m\right) -$%
geometrically convex on $[0,b]$.
\end{definition}

Clearly, when we choose $\alpha =1$ in Definition \ref{dd}, then $f$ becomes
the $m-$geometrically convex function on $\left[ 0,b\right] $. A very useful
inequality will be given as following:

\begin{lemma}
\cite{xi} For $x,y\in \left[ 0,\infty \right) $ and $m,t\in \left( 0,1\right]
,$ if $x<y$ and $y\geq 1,$ then%
\begin{equation*}
x^{t}y^{m\left( 1-t\right) }\leq tx+\left( 1-t\right) y.
\end{equation*}
\end{lemma}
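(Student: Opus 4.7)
\medskip

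My plan is to reduce the claimed inequality to the standard weighted AM--GM inequality by first absorbing the factor $m$ into the exponent of $y$ using the hypothesis $y \geq 1$.

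The weighted AM--GM inequality asserts that for nonnegative $a,b$ and $s \in [0,1]$,
\begin{equation*}
a^{s} b^{1-s} \leq s a + (1-s) b.
\end{equation*}
Applying this with $a=x$, $b=y$, $s=t$ immediately yields
\begin{equation*}
x^{t} y^{1-t} \leq t x + (1-t) y,
\end{equation*}
which is the desired right-hand side. So it suffices to prove the monotonicity estimate $x^{t} y^{m(1-t)} \leq x^{t} y^{1-t}$, which is equivalent (assuming $x \geq 0$ and dividing through by $x^t$ when $x > 0$, and trivially true when $x=0$) to $y^{m(1-t)} \leq y^{1-t}$.

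For this last step I use the two hypotheses on $m$ and $y$: since $m \in (0,1]$ and $t \in (0,1]$ we have $m(1-t) \leq 1-t$, and since $y \geq 1$ the function $s \mapsto y^{s}$ is nondecreasing in $s$. Therefore $y^{m(1-t)} \leq y^{1-t}$. Chaining the two inequalities gives
\begin{equation*}
x^{t} y^{m(1-t)} \leq x^{t} y^{1-t} \leq t x + (1-t) y,
\end{equation*}
as claimed.

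I do not expect a serious obstacle here: the only subtlety is handling the boundary case $x=0$ (where $x^{t}=0$ and the inequality reduces to $0 \leq (1-t)y$, which is obvious), and making sure that the condition $y \geq 1$ is used exactly where it is needed, namely to convert $m(1-t) \leq 1-t$ into the pointwise inequality $y^{m(1-t)} \leq y^{1-t}$. The hypothesis $x < y$ is actually not needed for this step-by-step argument; it presumably is present to keep this lemma consistent with the intended applications to $m$- and $(\alpha,m)$-geometrically convex functions later in the paper.
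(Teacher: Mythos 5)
Your argument is correct: the paper itself states this lemma without proof (it is imported from the cited reference of Xi, Bai and Qi), and your two-step reduction --- first $y^{m(1-t)}\leq y^{1-t}$ from $m(1-t)\leq 1-t$ together with $y\geq 1$, then the weighted AM--GM inequality $x^{t}y^{1-t}\leq tx+(1-t)y$ --- is exactly the standard proof given in that source. Your observation that the hypothesis $x<y$ is not actually needed is also accurate.
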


We give some necessary definitions and mathematical preliminaries of
fractional calculus theory which are used throughout this paper.

\begin{definition}
Let $f\in L_{1}[a,b].$ The Riemann-Liouville integrals $J_{a^{+}}^{\mu }f$
and $J_{b^{-}}^{\mu }f$ of order $\mu >0$ with $a\geq 0$ are defined by%
\begin{equation*}
J_{a^{+}}^{\mu }f\left( x\right) =\frac{1}{\Gamma (\mu )}\underset{a}{%
\overset{x}{\int }}\left( x-t\right) ^{\mu -1}f(t)dt,\text{ \ }x>a
\end{equation*}%
and%
\begin{equation*}
J_{b^{-}}^{\mu }f\left( x\right) =\frac{1}{\Gamma (\mu )}\underset{x}{%
\overset{b}{\int }}\left( t-x\right) ^{\mu -1}f(t)dt,\text{ \ }x<b
\end{equation*}%
respectively where $\Gamma (\mu )=\underset{0}{\overset{\infty }{\int }}%
e^{-t}u^{\mu -1}du.$ Here is $J_{a^{+}}^{0}f(x)=J_{b^{-}}^{0}f(x)=f(x).$
\end{definition}

In the case of $\mu =1$, the fractional integral reduces to the classical
integral. Several researchers have interested on this topic and several
papers have been written connected with \ fractional integral inequalities
see \cite{anastas}, \cite{dahmani}, \cite{zdahm}, \cite{zdah}, \cite{dahtab}%
, \cite{a}, \cite{zeki2} and \cite{a2}.

The aim of this study is to establish some Ostrowski type inequalities for
the class of functions whose derivatives in absolute value are $m-$ and $%
\left( \alpha ,m\right) -$ geometrically convex functions via
Riemann-Liouville fractional integrals.

\section{Ostrowski type inequalities for $m-$ and $\left( \protect\alpha %
,m\right) -$geometrically convex functions}

In order to prove our main theorems, we need the following lemma that has
been obtained in \cite{a2}:

\begin{lemma}
\label{ls}Let $f:\left[ a,b\right] \rightarrow 
\mathbb{R}
$ be a differentiable mapping on $\left( a,b\right) $ with $a<b.$ If $%
f^{\prime }\in L\left[ a,b\right] ,$ then for all $x\in \left[ a,b\right] $
and $\mu >0$ we have:%
\begin{eqnarray*}
&&\frac{\left( x-a\right) ^{\mu }+\left( b-x\right) ^{\mu }}{b-a}f\left(
x\right) -\frac{\Gamma \left( \mu +1\right) }{b-a}\left[ J_{x^{-}}^{\mu
}f\left( a\right) +J_{x^{+}}^{\mu }f\left( b\right) \right] \\
&=&\frac{\left( x-a\right) ^{\mu +1}}{b-a}\int_{0}^{1}t^{\mu }f^{\prime
}\left( tx+\left( 1-t\right) a\right) dt+\frac{\left( b-x\right) ^{\mu +1}}{%
b-a}\int_{0}^{1}t^{\mu }f^{\prime }\left( tx+\left( 1-t\right) b\right) dt
\end{eqnarray*}%
where $\Gamma (\mu )=\underset{0}{\overset{\infty }{\int }}e^{-t}u^{\mu
-1}du.$
\end{lemma}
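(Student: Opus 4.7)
The plan is to verify the identity by evaluating each of the two integrals on the right-hand side separately. In each case I will apply a linear change of variables that turns the integral into an integral over $[a,x]$ or $[x,b]$ with a polynomial weight, then integrate by parts once, and finally recognize the boundary integral that remains as a Riemann--Liouville fractional integral.

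For the first integral I would set $u = tx+(1-t)a$, so that $t = (u-a)/(x-a)$, $dt = du/(x-a)$, and $[0,1]$ maps monotonically onto $[a,x]$. After this substitution, $\int_{0}^{1} t^{\mu} f'(tx+(1-t)a)\,dt$ becomes $(x-a)^{-\mu-1}\int_{a}^{x}(u-a)^{\mu}f'(u)\,du$. Integration by parts with $v=(u-a)^{\mu}$ and $dw=f'(u)\,du$ yields the boundary contribution $(x-a)^{\mu}f(x)$ together with the residual $-\mu\int_{a}^{x}(u-a)^{\mu-1}f(u)\,du$, and the identity $\mu\Gamma(\mu)=\Gamma(\mu+1)$ together with the definition of $J_{x^{-}}^{\mu}f(a)$ identifies this residual as $-\Gamma(\mu+1)J_{x^{-}}^{\mu}f(a)$. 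Multiplying through by $(x-a)^{\mu+1}/(b-a)$ then reproduces the two terms on the left-hand side that involve $(x-a)^{\mu}f(x)$ and $J_{x^{-}}^{\mu}f(a)$. The second integral is handled by the parallel substitution $u=tx+(1-t)b$, which converts it to $(b-x)^{-\mu-1}\int_{x}^{b}(b-u)^{\mu}f'(u)\,du$; a second integration by parts then produces a contribution involving $f(x)$ and $\Gamma(\mu+1)J_{x^{+}}^{\mu}f(b)$, which is meant to account for the remaining two terms on the left.

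The routine parts here are the two substitutions and the two integrations by parts. The main obstacle is sign-bookkeeping in the second integral, because the substitution $u=tx+(1-t)b$ reverses orientation (sending $t=0\mapsto u=b$ and $t=1\mapsto u=x$), and because differentiating $(b-u)^{\mu}$ contributes its own minus sign. These two sign flips must be combined carefully so that, after summing the two pieces, the $f(x)$ boundary terms assemble into the symmetric prefactor $[(x-a)^{\mu}+(b-x)^{\mu}]/(b-a)$ and the two fractional integrals appear together under the common factor $-\Gamma(\mu+1)/(b-a)$, as displayed in the lemma.
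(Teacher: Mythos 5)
Your method --- an affine substitution in each integral followed by one integration by parts --- is the standard route here; note that the paper itself offers no proof of this lemma, merely citing \cite{a2}, so there is nothing to compare against except the computation itself. The first half of your plan checks out: the substitution $u=tx+(1-t)a$ and integration by parts give
\[
\frac{(x-a)^{\mu +1}}{b-a}\int_{0}^{1}t^{\mu }f^{\prime }(tx+(1-t)a)\,dt=\frac{(x-a)^{\mu }f(x)-\Gamma (\mu +1)J_{x^{-}}^{\mu }f(a)}{b-a},
\]
exactly as you describe. The gap sits precisely at the point you flag as ``sign-bookkeeping'' and then wave through. The substitution $u=tx+(1-t)b$ does give $(b-x)^{-\mu -1}\int_{x}^{b}(b-u)^{\mu }f^{\prime }(u)\,du$, but the boundary term of the integration by parts is $\bigl[(b-u)^{\mu }f(u)\bigr]_{x}^{b}=-(b-x)^{\mu }f(x)$, with a minus sign, while the surviving integral is $+\mu \int_{x}^{b}(b-u)^{\mu -1}f(u)\,du=+\Gamma (\mu +1)J_{x^{+}}^{\mu }f(b)$. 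Hence the second summand of the right-hand side equals $\bigl[-(b-x)^{\mu }f(x)+\Gamma (\mu +1)J_{x^{+}}^{\mu }f(b)\bigr]/(b-a)$, and the total is
\[
\frac{(x-a)^{\mu }-(b-x)^{\mu }}{b-a}f(x)-\frac{\Gamma (\mu +1)}{b-a}\bigl[J_{x^{-}}^{\mu }f(a)-J_{x^{+}}^{\mu }f(b)\bigr],
\]
which is not the displayed left-hand side: the two $f(x)$ boundary terms do \emph{not} assemble into the symmetric prefactor.

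The reason they cannot is that the identity as printed is false. Take $f(u)=u$, $\mu =1$, $a=x=0$, $b=1$: the left-hand side is $-\tfrac{1}{2}$ while the stated right-hand side is $+\tfrac{1}{2}$. The correct identity --- the one actually proved in \cite{a2} --- carries a minus sign in front of the second integral, i.e. the right-hand side should read $\frac{(x-a)^{\mu +1}}{b-a}\int_{0}^{1}t^{\mu }f^{\prime }(tx+(1-t)a)\,dt-\frac{(b-x)^{\mu +1}}{b-a}\int_{0}^{1}t^{\mu }f^{\prime }(tx+(1-t)b)\,dt$, and with that sign your computation closes up perfectly. So your strategy is the right one, but the assertion that the signs ``must be combined carefully so that'' they reproduce the stated formula is exactly the step that fails; an honest execution of your own plan proves the corrected identity and refutes the one in the statement. (This matters downstream: the theorems in the paper only use the triangle inequality on this lemma, so they survive, but your proof as announced cannot be completed for the lemma as written.)
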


\begin{theorem}
\label{t}Let $I\supset \left[ 0,\infty \right) $ be an open interval and $%
f:I\rightarrow \left( 0,\infty \right) $ is differentiable. If $f^{\prime
}\in L\left[ a,b\right] $ and $\left\vert f^{\prime }\right\vert $ is
decreasing and $\left( \alpha ,m\right) -$geometrically convex on $\left[
\min \left\{ 1,a\right\} ,b\right] $ for $a\in \left[ 0,\infty \right) ,$ $%
b\geq 1,$ and $\left\vert f^{\prime }\left( x\right) \right\vert \leq M\leq
1,$ and $\left( \alpha ,m\right) \in \left( 0,1\right] \times \left( 0,1%
\right] ,$ then the following inequality for fractional integrals with $\mu
>0$ holds:%
\begin{eqnarray}
&&\left\vert \frac{\left( x-a\right) ^{\mu }+\left( b-x\right) ^{\mu }}{b-a}%
f\left( x\right) -\frac{\Gamma \left( \mu +1\right) }{b-a}\left[
J_{x^{-}}^{\mu }f\left( a\right) +J_{x^{+}}^{\mu }f\left( b\right) \right]
\right\vert  \label{x0} \\
&\leq &\left[ \frac{\left( x-a\right) ^{\mu +1}+\left( b-x\right) ^{\mu +1}}{%
b-a}\right] \times K\left( \alpha ,m,\mu ;k\left( \alpha \right) \right) 
\notag
\end{eqnarray}%
where%
\begin{equation*}
k\left( \alpha \right) =\left\{ 
\begin{array}{cc}
M^{m}\int_{0}^{1}t^{\mu }M^{t\alpha \left( 1-m\right) }dt & ,M<1 \\ 
\frac{1}{\mu +1} & ,M=1%
\end{array}%
\right. \text{ .}
\end{equation*}
\end{theorem}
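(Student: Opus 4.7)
The plan is to proceed from the integral identity in Lemma \ref{ls}. Denoting by $A(x)$ the expression inside the absolute value on the left-hand side of (\ref{x0}), that lemma gives
\begin{equation*}
A(x) = \frac{(x-a)^{\mu +1}}{b-a}\int_{0}^{1}t^{\mu }f'(tx+(1-t)a)\,dt + \frac{(b-x)^{\mu +1}}{b-a}\int_{0}^{1}t^{\mu }f'(tx+(1-t)b)\,dt.
\end{equation*}
Taking absolute values and pulling them inside via the triangle inequality reduces the problem to bounding $\int_{0}^{1}t^{\mu }|f'(tx+(1-t)y)|\,dt$ uniformly by $K(\alpha ,m,\mu ;k(\alpha ))$ for each endpoint $y\in \{a,b\}$; the weights $(x-a)^{\mu +1}$ and $(b-x)^{\mu +1}$ then combine to produce the prefactor in (\ref{x0}).

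The pointwise estimate on the integrand proceeds in three steps. First I pass from the arithmetic combination $tx+(1-t)y$ to the geometric combination $x^{t}y^{m(1-t)}$: the AM-GM inequality gives $x^{t}y^{1-t}\leq tx+(1-t)y$, and since $y\geq 1$ (which holds directly when $y=b$, and otherwise by working on $[\min \{1,a\},b]$) together with $m\in (0,1]$ yields $y^{m(1-t)}\leq y^{1-t}$, so
\begin{equation*}
x^{t}y^{m(1-t)}\leq tx+(1-t)y.
\end{equation*}
The hypothesis that $|f'|$ is decreasing then transfers this to $|f'(tx+(1-t)y)|\leq |f'(x^{t}y^{m(1-t)})|$. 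Second, the $(\alpha ,m)$-geometric convexity of $|f'|$ from Definition \ref{dd} gives
\begin{equation*}
|f'(x^{t}y^{m(1-t)})|\leq |f'(x)|^{t^{\alpha }}|f'(y)|^{m(1-t^{\alpha })}.
\end{equation*}
Third, bounding $|f'(x)|,|f'(y)|\leq M\leq 1$ and using that $M^{r}$ is non-increasing in $r\geq 0$, I obtain
\begin{equation*}
|f'(x^{t}y^{m(1-t)})|\leq M^{t^{\alpha }+m(1-t^{\alpha })}=M^{m}\cdot M^{t^{\alpha }(1-m)}.
\end{equation*}

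Multiplying by $t^{\mu }$ and integrating over $[0,1]$ delivers the bound $M^{m}\int_{0}^{1}t^{\mu }M^{t^{\alpha }(1-m)}\,dt$, which is exactly $k(\alpha)$ in the regime $M<1$. When $M=1$ the factor $M^{t^{\alpha }(1-m)}$ is identically one and the integral collapses to $\int_{0}^{1}t^{\mu }\,dt=1/(\mu +1)$, matching the second branch of $k(\alpha)$. Substituting this back into the identity from Lemma \ref{ls} and factoring $1/(b-a)$ produces (\ref{x0}).

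The main technical subtlety I anticipate lies in applying the step ``arithmetic combination dominates geometric combination'' in the orientation that makes the $(\alpha ,m)$-convexity step produce exponents in $t^{\alpha }$ rather than $(1-t)^{\alpha }$, so that the resulting integrand matches the definition of $k(\alpha)$. This forces AM-GM to be applied with exponent $t$ attached to the free variable $x$ and $(1-t)$ attached to the endpoint, rather than invoking the useful lemma of \cite{xi} verbatim. The hypothesis $b\geq 1$ and the assumption that $|f'|$ is geometrically convex on $[\min \{1,a\},b]$ are precisely what legitimize the step $y^{m(1-t)}\leq y^{1-t}$ needed to set up the first inequality; once that orientation is fixed the remaining computation is routine.
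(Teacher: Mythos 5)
Your argument follows the paper's proof almost line for line (Lemma \ref{ls}, triangle inequality, passage from the arithmetic to the geometric mean using that $\left\vert f^{\prime }\right\vert$ is decreasing, then the $\left( \alpha ,m\right)$-geometric convexity and the bound $M\leq 1$), but it stops one step short of the stated conclusion, and that last step is where your identification goes wrong. The chain you describe yields the bound
\[
M^{m}\int_{0}^{1}t^{\mu }M^{t^{\alpha }\left( 1-m\right) }dt ,
\]
with exponent $t^{\alpha }\left( 1-m\right)$, whereas the theorem's $k\left( \alpha \right)$ (for $M<1$) is $M^{m}\int_{0}^{1}t^{\mu }M^{t\alpha \left( 1-m\right) }dt$ with the \emph{product} $t\alpha \left( 1-m\right)$ in the exponent. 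These are not the same quantity, so your claim that your integral ``is exactly $k(\alpha)$'' is false. The paper bridges this with the auxiliary inequality $\lambda ^{u^{v}}\leq \lambda ^{uv}$ for $0<\lambda \leq 1$ and $0<u,v\leq 1$ (its inequality (\ref{xx})): since $t^{\alpha }\geq t\geq t\alpha$ for $t,\alpha \in \left( 0,1\right]$ and $M^{1-m}\leq 1$, one gets $M^{t^{\alpha }\left( 1-m\right) }\leq M^{t\alpha \left( 1-m\right) }$, and only then does the bound take the form $k\left( \alpha \right)$. Your bound is in fact the sharper one, so the theorem still follows, but as written your proof does not arrive at the statement being proved; you need to add this monotonicity step explicitly.

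A secondary remark: your justification of $x^{t}y^{m\left( 1-t\right) }\leq tx+\left( 1-t\right) y$ via $y^{m\left( 1-t\right) }\leq y^{1-t}$ requires $y\geq 1$, which is fine for $y=b$ but is not guaranteed for $y=a$ (the phrase ``by working on $\left[ \min \left\{ 1,a\right\} ,b\right]$'' does not make $a\geq 1$). The paper shares exactly this looseness, since the cited lemma of Xi et al.\ also assumes the relevant endpoint is at least $1$, so this is not a point on which you diverge from the source; but it is worth being aware that neither argument fully covers the term involving $a$ when $a<1$.
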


\begin{proof}
By Lemma \ref{ls} and since $\left\vert f^{\prime }\right\vert $ is
decreasing and $\left( \alpha ,m\right) $-geometrically convex on $\left[
\min \left\{ 1,a\right\} ,b\right] ,$ we have%
\begin{eqnarray*}
&&\left\vert \frac{\left( x-a\right) ^{\mu }+\left( b-x\right) ^{\mu }}{b-a}%
f\left( x\right) -\frac{\Gamma \left( \mu +1\right) }{b-a}\left[
J_{x^{-}}^{\mu }f\left( a\right) +J_{x^{+}}^{\mu }f\left( b\right) \right]
\right\vert \\
&\leq &\frac{\left( x-a\right) ^{\mu +1}}{b-a}\int_{0}^{1}t^{\mu }\left\vert
f^{\prime }\left( tx+\left( 1-t\right) a\right) \right\vert dt+\frac{\left(
b-x\right) ^{\mu +1}}{b-a}\int_{0}^{1}t^{\mu }\left\vert f^{\prime }\left(
tx+\left( 1-t\right) b\right) \right\vert dt \\
&\leq &\frac{\left( x-a\right) ^{\mu +1}}{b-a}\int_{0}^{1}t^{\mu }\left\vert
f^{\prime }\left( x^{t}a^{m\left( 1-t\right) }\right) \right\vert dt+\frac{%
\left( b-x\right) ^{\mu +1}}{b-a}\int_{0}^{1}t^{\mu }\left\vert f^{\prime
}\left( x^{t}b^{m\left( 1-t\right) }\right) \right\vert dt \\
&\leq &\frac{\left( x-a\right) ^{\mu +1}}{b-a}\int_{0}^{1}t^{\mu }\left\vert
f^{\prime }\left( x\right) \right\vert ^{t^{\alpha }}\left\vert f^{\prime
}\left( a\right) \right\vert ^{m\left( 1-t^{\alpha }\right) }dt+\frac{\left(
b-x\right) ^{\mu +1}}{b-a}\int_{0}^{1}t^{\mu }\left\vert f^{\prime }\left(
x\right) \right\vert ^{t^{\alpha }}\left\vert f^{\prime }\left( b\right)
\right\vert ^{m\left( 1-t^{\alpha }\right) }dt \\
&\leq &\frac{\left( x-a\right) ^{\mu +1}}{b-a}\int_{0}^{1}t^{\mu
}M^{m+t^{\alpha }\left( 1-m\right) }dt+\frac{\left( b-x\right) ^{\mu +1}}{b-a%
}\int_{0}^{1}t^{\mu }M^{m+t^{\alpha }\left( 1-m\right) }dt \\
&=&\frac{M^{m}}{b-a}\int_{0}^{1}t^{\mu }M^{t^{\alpha }\left( 1-m\right) }dt%
\left[ \left( x-a\right) ^{\mu +1}+\left( b-x\right) ^{\mu +1}\right] .
\end{eqnarray*}%
If $0<\lambda \leq 1\leq \partial ,$ $0<u,v\leq 1,$ then%
\begin{equation}
\lambda ^{u^{v}}\leq \lambda ^{uv}.  \label{xx}
\end{equation}%
When $M\leq 1,$ by (\ref{xx}), we get that%
\begin{eqnarray}
&&\left\vert \frac{\left( x-a\right) ^{\mu }+\left( b-x\right) ^{\mu }}{b-a}%
f\left( x\right) -\frac{\Gamma \left( \mu +1\right) }{b-a}\left[
J_{x^{-}}^{\mu }f\left( a\right) +J_{x^{+}}^{\mu }f\left( b\right) \right]
\right\vert  \label{x1} \\
&\leq &\frac{M^{m}}{b-a}\left[ \left( x-a\right) ^{\mu +1}+\left( b-x\right)
^{\mu +1}\right] \int_{0}^{1}t^{\mu }M^{t^{\alpha }\left( 1-m\right) }dt 
\notag \\
&\leq &\frac{M^{m}}{b-a}\left[ \left( x-a\right) ^{\mu +1}+\left( b-x\right)
^{\mu +1}\right] \int_{0}^{1}t^{\mu }M^{t\alpha \left( 1-m\right) }dt. 
\notag
\end{eqnarray}%
The proof is completed.
\end{proof}

\begin{corollary}
Let $I\supset \left[ 0,\infty \right) $ be an open interval and $%
f:I\rightarrow \left( 0,\infty \right) $ is differentiable. If $f^{\prime
}\in L\left[ a,b\right] $ and $\left\vert f^{\prime }\right\vert $ is
decreasing and $m$-geometrically convex on $\left[ \min \left\{ 1,a\right\}
,b\right] $ for $a\in \left[ 0,\infty \right) ,$ $b\geq 1,$ and $\left\vert
f^{\prime }\left( x\right) \right\vert \leq M\leq 1,$ and $m\in \left( 0,1%
\right] ,$ then the following inequality for fractional integrals with $\mu
>0$ holds:%
\begin{eqnarray*}
&&\left\vert \frac{\left( x-a\right) ^{\mu }+\left( b-x\right) ^{\mu }}{b-a}%
f\left( x\right) -\frac{\Gamma \left( \mu +1\right) }{b-a}\left[
J_{x^{-}}^{\mu }f\left( a\right) +J_{x^{+}}^{\mu }f\left( b\right) \right]
\right\vert \\
&\leq &\left[ \frac{\left( x-a\right) ^{\mu +1}+\left( b-x\right) ^{\mu +1}}{%
b-a}\right] \times K\left( 1,m,\mu ;k\left( 1\right) \right)
\end{eqnarray*}%
where%
\begin{equation*}
k\left( 1\right) =\left\{ 
\begin{array}{cc}
\frac{1}{\mu +1} & ,M=1 \\ 
M^{m}\int_{0}^{1}t^{\mu }M^{\left( 1-m\right) t}dt & ,M\neq 1%
\end{array}%
\right. \text{ .}
\end{equation*}
\end{corollary}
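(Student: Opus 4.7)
The plan is to obtain the corollary as a direct specialization of Theorem \ref{t} with $\alpha=1$. As the remark immediately following Definition \ref{dd} points out, an $m$-geometrically convex function is precisely an $(\alpha,m)$-geometrically convex function with $\alpha=1$; hence every hypothesis of the corollary is exactly a hypothesis of Theorem \ref{t} at $\alpha=1$, and no new verifications are needed at the level of assumptions.

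Given this, I would substitute $\alpha=1$ into the conclusion of Theorem \ref{t}. The prefactor $[(x-a)^{\mu+1}+(b-x)^{\mu+1}]/(b-a)$ is independent of $\alpha$, so it carries over verbatim, and the auxiliary constant becomes $K(1,m,\mu;k(1))$, matching the right-hand side of the corollary.

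What remains is to check that $k(\alpha)$ from Theorem \ref{t}, when evaluated at $\alpha=1$, reproduces the $k(1)$ displayed in the corollary. For $M=1$ the upper case of $k(\alpha)$ gives $\int_{0}^{1}t^{\mu}\,dt=1/(\mu+1)$, as required. For $M<1$, which under the standing assumption $M\leq 1$ is equivalent to the corollary's condition $M\neq 1$, setting $\alpha=1$ in $M^{m}\int_{0}^{1}t^{\mu}M^{t\alpha(1-m)}\,dt$ produces exactly $M^{m}\int_{0}^{1}t^{\mu}M^{(1-m)t}\,dt$, which is the displayed expression.

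There is no genuine obstacle here: the only step in the proof of Theorem \ref{t} that could lose information upon specialization is the auxiliary estimate (\ref{xx}), and it becomes an equality when $\alpha=1$, since then $u^{v}=t^{1}=ut$ for the specific exponents used. Consequently the corollary follows from Theorem \ref{t} by pure substitution, and the only bookkeeping needed is to rewrite the two branches of $k(\alpha)$ in their $\alpha=1$ form.
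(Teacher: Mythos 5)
Your proposal is correct and follows exactly the paper's own route: the paper proves this corollary by taking $\alpha=1$ in (\ref{x0}) of Theorem \ref{t}, which is precisely your substitution argument. Your additional bookkeeping (matching the two branches of $k(1)$ and noting that $m$-geometric convexity is the $\alpha=1$ case of $(\alpha,m)$-geometric convexity) only makes explicit what the paper leaves implicit.
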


\begin{proof}
We take $\alpha =1$ in (\ref{x0}), we get the required result.
\end{proof}

The corresponding version for powers of the absolute value of the first
derivative is incorporated in the following result:

\begin{theorem}
Let $I\supset \left[ 0,\infty \right) $ be an open interval and $%
f:I\rightarrow \left( 0,\infty \right) $ is differentiable. If $f^{\prime
}\in L\left[ a,b\right] $ and $\left\vert f^{\prime }\right\vert ^{q}$ is
decreasing and $\left( \alpha ,m\right) $-geometrically convex on $\left[
\min \left\{ 1,a\right\} ,b\right] $ for $a\in \left[ 0,\infty \right) ,$ $%
b\geq 1,$ $p,q>1$ and $\left\vert f^{\prime }\left( x\right) \right\vert
\leq M<1,$ $x\in \left[ \min \left\{ 1,a\right\} ,b\right] $ and $\left(
\alpha ,m\right) \in \left( 0,1\right) \times \left( 0,1\right) ,$ then the
following inequality for fractional integrals with $\mu >0$ holds:%
\begin{eqnarray}
&&\left\vert \frac{\left( x-a\right) ^{\mu }+\left( b-x\right) ^{\mu }}{b-a}%
f\left( x\right) -\frac{\Gamma \left( \mu +1\right) }{b-a}\left[
J_{x^{-}}^{\mu }f\left( a\right) +J_{x^{+}}^{\mu }f\left( b\right) \right]
\right\vert  \label{xxy} \\
&\leq &M^{m}\left( \frac{1}{p\mu +1}\right) ^{\frac{1}{p}}\left( \frac{%
M^{q\alpha \left( 1-m\right) }-1}{q\alpha \left( 1-m\right) \ln M}\right) ^{%
\frac{1}{q}}\left[ \frac{\left( x-a\right) ^{\mu +1}+\left( b-x\right) ^{\mu
+1}}{b-a}\right]  \notag
\end{eqnarray}%
where $p^{-1}+q^{-1}=1$.
\end{theorem}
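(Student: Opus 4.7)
The plan is to follow the template set by Theorem~\ref{t}, but insert H\"older's inequality at the start so that the $t^\mu$ weight and the $\left|f'\right|^q$ factor can be estimated separately. First I would apply Lemma~\ref{ls} and pass the absolute value inside, so that the quantity on the left is bounded by
\[
\frac{(x-a)^{\mu+1}}{b-a}\int_{0}^{1}t^{\mu}\left|f'(tx+(1-t)a)\right|dt
+\frac{(b-x)^{\mu+1}}{b-a}\int_{0}^{1}t^{\mu}\left|f'(tx+(1-t)b)\right|dt.
\]

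Next I would apply H\"older's inequality with exponents $p$ and $q$ to each of the two integrals, which splits them as
\[
\left(\int_{0}^{1}t^{p\mu}\,dt\right)^{1/p}\left(\int_{0}^{1}\left|f'(tx+(1-t)c)\right|^{q}dt\right)^{1/q},
\qquad c\in\{a,b\}.
\]
The first factor equals $(p\mu+1)^{-1/p}$ immediately. For the second factor I would invoke the hypothesis that $\left|f'\right|^{q}$ is decreasing together with the lemma giving $x^{t}c^{m(1-t)}\le tx+(1-t)c$ (valid on $[\min\{1,a\},b]$ since $b\ge 1$), to switch from the arithmetic to the geometric mean inside $f'$. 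Then $(\alpha,m)$-geometric convexity of $\left|f'\right|^{q}$ yields
\[
\left|f'(tx+(1-t)c)\right|^{q}\le \left|f'(x)\right|^{qt^{\alpha}}\left|f'(c)\right|^{qm(1-t^{\alpha})}\le M^{qm+qt^{\alpha}(1-m)}.
\]

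The third step is to invoke inequality (\ref{xx}) with $\lambda=M^{q(1-m)}\le 1$, $u=t$, $v=\alpha$ to replace $M^{qt^{\alpha}(1-m)}$ by the linear exponent $M^{qt\alpha(1-m)}$. Because $M<1$, the resulting integral
\[
\int_{0}^{1}M^{qt\alpha(1-m)}\,dt = \frac{M^{q\alpha(1-m)}-1}{q\alpha(1-m)\ln M}
\]
can be computed in closed form as an elementary exponential integral. Pulling out the constant $M^{qm}$, taking the $1/q$-power, multiplying by the H\"older prefactor, and noting that the resulting bound does not depend on which endpoint $c\in\{a,b\}$ was used, I would factor out $[(x-a)^{\mu+1}+(b-x)^{\mu+1}]/(b-a)$ to obtain exactly (\ref{xxy}).

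The main delicate point will be step two: one needs the ordering $x^{t}c^{m(1-t)}\le tx+(1-t)c$ to cooperate with monotonicity of $\left|f'\right|^{q}$ in the correct direction for \emph{both} endpoints $c=a$ and $c=b$, which is why the hypothesis $a\in[0,\infty)$, $b\ge 1$ and the restriction to $[\min\{1,a\},b]$ is essential. The remaining work — applying (\ref{xx}) and evaluating the exponential integral — is routine once this step is justified.
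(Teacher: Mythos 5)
Your proposal is correct and follows essentially the same route as the paper's own proof: Lemma~\ref{ls}, H\"older's inequality with exponents $p$ and $q$, the monotonicity of $\left\vert f^{\prime }\right\vert ^{q}$ combined with the AM--GM lemma to pass to $x^{t}c^{m\left( 1-t\right) }$, then $\left( \alpha ,m\right) $-geometric convexity, inequality (\ref{xx}), and the closed-form evaluation of $\int_{0}^{1}M^{qt\alpha \left( 1-m\right) }dt$. The only cosmetic difference is that you apply H\"older before the monotonicity substitution while the paper performs both in a single step, which changes nothing.
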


\begin{proof}
\bigskip \bigskip By Lemma \ref{ls} and since $\left\vert f^{\prime
}\right\vert ^{q}$ is decreasing, and using the famous H\"{o}lder
inequality, we have%
\begin{eqnarray*}
&&\left\vert \frac{\left( x-a\right) ^{\mu }+\left( b-x\right) ^{\mu }}{b-a}%
f\left( x\right) -\frac{\Gamma \left( \mu +1\right) }{b-a}\left[
J_{x^{-}}^{\mu }f\left( a\right) +J_{x^{+}}^{\mu }f\left( b\right) \right]
\right\vert \\
&\leq &\frac{\left( x-a\right) ^{\mu +1}}{b-a}\int_{0}^{1}t^{\mu }\left\vert
f^{\prime }\left( tx+\left( 1-t\right) a\right) \right\vert dt+\frac{\left(
b-x\right) ^{\mu +1}}{b-a}\int_{0}^{1}t^{\mu }\left\vert f^{\prime }\left(
tx+\left( 1-t\right) b\right) \right\vert dt \\
&\leq &\frac{\left( x-a\right) ^{\mu +1}}{b-a}\left( \int_{0}^{1}t^{p\mu
}dt\right) ^{\frac{1}{p}}\left( \int_{0}^{1}\left\vert f^{\prime }\left(
x^{t}a^{m\left( 1-t\right) }\right) \right\vert ^{q}dt\right) ^{\frac{1}{q}}
\\
&&+\frac{\left( b-x\right) ^{\mu +1}}{b-a}\left( \int_{0}^{1}t^{p\mu
}dt\right) ^{\frac{1}{p}}\left( \int_{0}^{1}\left\vert f^{\prime }\left(
x^{t}b^{m\left( 1-t\right) }\right) \right\vert ^{q}dt\right) ^{\frac{1}{q}}
\end{eqnarray*}%
Since $\left\vert f^{\prime }\right\vert ^{q}$ is $\left( \alpha ,m\right) -$%
geometrically convex on $\left[ \min \left\{ 1,a\right\} ,b\right] $ and $%
\left\vert f^{\prime }\left( x\right) \right\vert \leq M<1$, we obtain that 
\begin{eqnarray*}
\int_{0}^{1}\left\vert f^{\prime }\left( x^{t}a^{m\left( 1-t\right) }\right)
\right\vert ^{q}dt &\leq &\int_{0}^{1}\left\vert f^{\prime }\left( x\right)
\right\vert ^{qt^{\alpha }}\left\vert f^{\prime }\left( a\right) \right\vert
^{mq\left( 1-t^{\alpha }\right) }dt \\
&\leq &\int_{0}^{1}M^{qt^{\alpha }+mq\left( 1-t^{\alpha }\right) }dt\leq
M^{mq}\int_{0}^{1}M^{qt^{\alpha }\left( 1-m\right) }dt \\
&\leq &M^{mq}\int_{0}^{1}M^{qt\alpha \left( 1-m\right) }dt=M^{mq}\frac{%
M^{q\alpha \left( 1-m\right) }-1}{q\alpha \left( 1-m\right) \ln M}
\end{eqnarray*}

and%
\begin{eqnarray*}
\int_{0}^{1}\left\vert f^{\prime }\left( x^{t}b^{m\left( 1-t\right) }\right)
\right\vert ^{q}dt &\leq &\int_{0}^{1}\left\vert f^{\prime }\left( x\right)
\right\vert ^{qt^{\alpha }}\left\vert f^{\prime }\left( b\right) \right\vert
^{mq\left( 1-t^{\alpha }\right) }dt \\
&\leq &M^{mq}\frac{M^{q\alpha \left( 1-m\right) }-1}{q\alpha \left(
1-m\right) \ln M}
\end{eqnarray*}%
and by simple computation%
\begin{equation*}
\int_{0}^{1}t^{p\mu }dt=\frac{1}{p\mu +1}.
\end{equation*}%
Hence, we have%
\begin{eqnarray*}
&&\left\vert \frac{\left( x-a\right) ^{\mu }+\left( b-x\right) ^{\mu }}{b-a}%
f\left( x\right) -\frac{\Gamma \left( \mu +1\right) }{b-a}\left[
J_{x^{-}}^{\mu }f\left( a\right) +J_{x^{+}}^{\mu }f\left( b\right) \right]
\right\vert \\
&\leq &M^{m}\left( \frac{1}{p\mu +1}\right) ^{\frac{1}{p}}\left( \frac{%
M^{q\alpha \left( 1-m\right) }-1}{q\alpha \left( 1-m\right) \ln M}\right) ^{%
\frac{1}{q}}\left[ \frac{\left( x-a\right) ^{\mu +1}+\left( b-x\right) ^{\mu
+1}}{b-a}\right]
\end{eqnarray*}%
which completes the proof.
\end{proof}

\begin{corollary}
Let $I\supset \left[ 0,\infty \right) $ be an open interval and $%
f:I\rightarrow \left( 0,\infty \right) $ is differentiable. If $f^{\prime
}\in L\left[ a,b\right] $ and $\left\vert f^{\prime }\right\vert ^{q}$ is
decreasing and $m$-geometrically convex on $\left[ \min \left\{ 1,a\right\}
,b\right] $ for $a\in \left[ 0,\infty \right) ,$ $b\geq 1,$ $p,q>1$ and $%
\left\vert f^{\prime }\left( x\right) \right\vert \leq M<1,$ $x\in \left[
\min \left\{ 1,a\right\} ,b\right] $ and $m\in \left( 0,1\right) ,$ then the
following inequality for fractional integrals with $\mu >0$ holds:%
\begin{eqnarray*}
&&\left\vert \frac{\left( x-a\right) ^{\mu }+\left( b-x\right) ^{\mu }}{b-a}%
f\left( x\right) -\frac{\Gamma \left( \mu +1\right) }{b-a}\left[
J_{x^{-}}^{\mu }f\left( a\right) +J_{x^{+}}^{\mu }f\left( b\right) \right]
\right\vert \\
&\leq &M^{m}\left( \frac{1}{p\mu +1}\right) ^{\frac{1}{p}}\left( \frac{%
M^{q\left( 1-m\right) }-1}{q\left( 1-m\right) \ln M}\right) ^{\frac{1}{q}}%
\left[ \frac{\left( x-a\right) ^{\mu +1}+\left( b-x\right) ^{\mu +1}}{b-a}%
\right]
\end{eqnarray*}%
where $1/p+1/q=1.$
\end{corollary}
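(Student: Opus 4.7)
The plan is to derive this statement as an immediate specialization of the preceding theorem with $\alpha = 1$. First I would verify that the hypotheses line up: by the remark following Definition \ref{dd}, an $m$-geometrically convex function is precisely a $(1,m)$-geometrically convex function, so the assumption that $\left\vert f^{\prime }\right\vert ^{q}$ is $m$-geometrically convex on $\left[ \min \left\{ 1,a\right\} ,b\right] $ is exactly the $\alpha = 1$ case of $(\alpha,m)$-geometric convexity. Every other hypothesis (monotonicity of $\left\vert f^{\prime }\right\vert ^{q}$, $a\in \left[ 0,\infty \right) $, $b\geq 1$, $p,q>1$ with $p^{-1}+q^{-1}=1$, $\left\vert f^{\prime }\left( x\right) \right\vert \leq M<1$, and $m\in (0,1)$) matches the theorem's assumptions verbatim once $\alpha = 1$ is fixed.

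Next I would invoke inequality (\ref{xxy}) and substitute $\alpha = 1$ on the right-hand side. The factor $\left( \frac{M^{q\alpha \left( 1-m\right) }-1}{q\alpha \left( 1-m\right) \ln M}\right) ^{1/q}$ collapses to $\left( \frac{M^{q\left( 1-m\right) }-1}{q\left( 1-m\right) \ln M}\right) ^{1/q}$, while the remaining quantities $M^{m}$, $\left( \frac{1}{p\mu +1}\right) ^{1/p}$, and $\frac{\left( x-a\right) ^{\mu +1}+\left( b-x\right) ^{\mu +1}}{b-a}$ are unaffected. This reproduces exactly the claimed bound.

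Since the argument is a pure specialization, there is no serious obstacle. The only minor point worth confirming is that the right-hand side remains well defined after the substitution: $\ln M$ appears in a denominator and is nonzero because $M<1$ strictly, and the factor $1-m$ is nonzero because $m\in \left( 0,1\right) $ strictly. Both are already built into the corollary's hypotheses, so the result follows without any further computation.
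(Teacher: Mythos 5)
Your proof is correct and coincides with the paper's own argument, which likewise obtains the corollary by setting $\alpha =1$ in inequality (\ref{xxy}); your additional checks on the hypotheses and on the well-definedness of the bound only make explicit what the paper leaves implicit.
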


\begin{proof}
We take $\alpha =1$ in (\ref{xxy}), we get the required result.
\end{proof}

A different approach leads to the following result.

\begin{theorem}
\label{ttt}Let $I\supset \left[ 0,\infty \right) $ be an open interval and $%
f:I\rightarrow \left( 0,\infty \right) $ is differentiable. If $f^{\prime
}\in L\left[ a,b\right] $ and $\left\vert f^{\prime }\right\vert ^{q}$ is
decreasing and $\left( \alpha ,m\right) $-geometrically convex on $\left[
\min \left\{ 1,a\right\} ,b\right] $ for $a\in \left[ 0,\infty \right) ,$ $%
b\geq 1,$ $q\geq 1$ and $\left\vert f^{\prime }\left( x\right) \right\vert
\leq M<1,$ $x\in \left[ \min \left\{ 1,a\right\} ,b\right] $ and $\alpha \in
\left( 0,1\right] ,$ $m\in \left( 0,1\right) ,$ then the following
inequality for fractional integrals with $\mu >0$ holds:%
\begin{eqnarray}
&&\left\vert \frac{\left( x-a\right) ^{\mu }+\left( b-x\right) ^{\mu }}{b-a}%
f\left( x\right) -\frac{\Gamma \left( \mu +1\right) }{b-a}\left[
J_{x^{-}}^{\mu }f\left( a\right) +J_{x^{+}}^{\mu }f\left( b\right) \right]
\right\vert  \label{xxx} \\
&\leq &M^{m}\left( \frac{1}{\mu +1}\right) ^{1-\frac{1}{q}}\left(
\int_{0}^{1}t^{\mu }M^{qt\alpha \left( 1-m\right) }dt\right) ^{\frac{1}{q}}%
\left[ \frac{\left( x-a\right) ^{\mu +1}+\left( b-x\right) ^{\mu +1}}{b-a}%
\right]  \notag
\end{eqnarray}
\end{theorem}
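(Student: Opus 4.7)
The plan is to start from the representation in Lemma \ref{ls}, pass the absolute value inside the two integrals, and then apply the power--mean inequality (the $q\ge 1$ version of H\"older with one factor a constant) to each integral separately. Concretely, I would write
\begin{equation*}
\int_{0}^{1}t^{\mu }|f'(tx+(1-t)a)|\,dt=\int_{0}^{1}t^{\mu (1-1/q)}\cdot t^{\mu /q}|f'(tx+(1-t)a)|\,dt,
\end{equation*}
so that power--mean yields
\begin{equation*}
\int_{0}^{1}t^{\mu }|f'(tx+(1-t)a)|\,dt\le \Bigl(\int_{0}^{1}t^{\mu }\,dt\Bigr)^{1-1/q}\Bigl(\int_{0}^{1}t^{\mu }|f'(tx+(1-t)a)|^{q}\,dt\Bigr)^{1/q},
\end{equation*}
and analogously for the integral over $[x,b]$. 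The first factor equals $(\mu+1)^{-(1-1/q)}$, which is exactly the coefficient that appears in \eqref{xxx}.

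Next I would estimate the two remaining $q$-th power integrals just as in the previous theorem. Since $|f'|^{q}$ is decreasing and the auxiliary Lemma from \cite{xi} gives $x^{t}a^{m(1-t)}\le tx+(1-t)a$ (with the hypothesis $b\ge 1$ ensuring the relevant endpoint lies in the allowed range), I can replace the arithmetic weighted mean inside $f'$ by the geometric one and bound
\begin{equation*}
|f'(tx+(1-t)a)|^{q}\le |f'(x^{t}a^{m(1-t)})|^{q}.
\end{equation*}
Then $(\alpha,m)$-geometric convexity of $|f'|^{q}$ gives $|f'(x^{t}a^{m(1-t)})|^{q}\le |f'(x)|^{qt^{\alpha }}|f'(a)|^{mq(1-t^{\alpha })}$, and using $|f'|\le M<1$ this is bounded by $M^{mq+qt^{\alpha }(1-m)}=M^{mq}\,M^{qt^{\alpha }(1-m)}$. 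The same chain works verbatim at $b$ in place of $a$.

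At this point the inequality \eqref{xx} from the proof of Theorem \ref{t} applies: with $\lambda =M\in(0,1]$, $u=t\in(0,1]$ and $v=\alpha\in(0,1]$, it gives $M^{qt^{\alpha }(1-m)}\le M^{qt\alpha (1-m)}$ (since $M<1$ and raising to a smaller exponent makes it larger). Substituting this back yields
\begin{equation*}
\int_{0}^{1}t^{\mu }|f'(tx+(1-t)a)|^{q}\,dt\le M^{mq}\int_{0}^{1}t^{\mu }M^{qt\alpha (1-m)}\,dt,
\end{equation*}
and likewise at $b$. Pulling $M^{m}$ outside the $1/q$-power, collecting the two integrals with the common factor $[(x-a)^{\mu+1}+(b-x)^{\mu+1}]/(b-a)$, and using $(\mu+1)^{-(1-1/q)}=(1/(\mu+1))^{1-1/q}$ produces exactly \eqref{xxx}.

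The only delicate point is orientation: because $M<1$ the direction of the exponent inequality \eqref{xx} is crucial, and one must be careful to invoke the decreasing hypothesis on $|f'|^{q}$ in the right direction so that the arithmetic-to-geometric-mean substitution increases (rather than decreases) the integrand. Once those monotonicity directions are tracked, the remaining computation is routine.
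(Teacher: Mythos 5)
Your proposal is correct and follows essentially the same route as the paper's proof: Lemma \ref{ls}, the power--mean inequality with the factorization $t^{\mu}=t^{\mu(1-1/q)}\cdot t^{\mu/q}$, the decreasing hypothesis combined with the lemma of Xi et al.\ to pass from $tx+(1-t)a$ to $x^{t}a^{m(1-t)}$, then $(\alpha,m)$-geometric convexity, the bound $M<1$, and the exponent inequality \eqref{xx}. Your version is in fact slightly more careful than the paper's, since you make explicit the role of the auxiliary lemma and the monotonicity direction at the arithmetic-to-geometric substitution step, which the paper leaves implicit.
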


\begin{proof}
By Lemma \ref{ls} and since $\left\vert f^{\prime }\right\vert ^{q}$ is
decreasing, and using the power mean inequality, we have%
\begin{eqnarray*}
&&\left\vert \frac{\left( x-a\right) ^{\mu }+\left( b-x\right) ^{\mu }}{b-a}%
f\left( x\right) -\frac{\Gamma \left( \mu +1\right) }{b-a}\left[
J_{x^{-}}^{\mu }f\left( a\right) +J_{x^{+}}^{\mu }f\left( b\right) \right]
\right\vert \\
&\leq &\frac{\left( x-a\right) ^{\mu +1}}{b-a}\int_{0}^{1}t^{\mu }\left\vert
f^{\prime }\left( tx+\left( 1-t\right) a\right) \right\vert dt+\frac{\left(
b-x\right) ^{\mu +1}}{b-a}\int_{0}^{1}t^{\mu }\left\vert f^{\prime }\left(
tx+\left( 1-t\right) b\right) \right\vert dt \\
&\leq &\frac{\left( x-a\right) ^{\mu +1}}{b-a}\left( \int_{0}^{1}t^{\mu
}dt\right) ^{1-\frac{1}{q}}\left( \int_{0}^{1}t^{\mu }\left\vert f^{\prime
}\left( x^{t}a^{m\left( 1-t\right) }\right) \right\vert ^{q}dt\right) ^{%
\frac{1}{q}} \\
&&+\frac{\left( b-x\right) ^{\mu +1}}{b-a}\left( \int_{0}^{1}t^{\mu
}dt\right) ^{1-\frac{1}{q}}\left( \int_{0}^{1}t^{\mu }\left\vert f^{\prime
}\left( x^{t}b^{m\left( 1-t\right) }\right) \right\vert ^{q}dt\right) ^{%
\frac{1}{q}}
\end{eqnarray*}%
Since $\left\vert f^{\prime }\right\vert ^{q}$ is $\left( \alpha ,m\right) $%
-geometrically convex and $\left\vert f^{\prime }\left( x\right) \right\vert
\leq M<1$ and by (\ref{xx}), we obtain 
\begin{eqnarray*}
\int_{0}^{1}t^{\mu }\left\vert f^{\prime }\left( x^{t}a^{m\left( 1-t\right)
}\right) \right\vert ^{q}dt &\leq &\int_{0}^{1}t^{\mu }\left\vert f^{\prime
}\left( x\right) \right\vert ^{qt^{\alpha }}\left\vert f^{\prime }\left(
a\right) \right\vert ^{mq\left( 1-t^{\alpha }\right) }dt \\
&\leq &\int_{0}^{1}t^{\mu }M^{qt^{\alpha }+mq\left( 1-t^{\alpha }\right)
}dt\leq M^{mq}\int_{0}^{1}t^{\mu }M^{qt^{\alpha }\left( 1-m\right) }dt \\
&\leq &M^{mq}\int_{0}^{1}t^{\mu }M^{qt\alpha \left( 1-m\right) }dt
\end{eqnarray*}%
and similarly%
\begin{equation*}
\int_{0}^{1}t^{\mu }\left\vert f^{\prime }\left( x^{t}b^{m\left( 1-t\right)
}\right) \right\vert ^{q}dt\leq M^{mq}\int_{0}^{1}t^{\mu }M^{qt\alpha \left(
1-m\right) }dt
\end{equation*}%
Hence, we have%
\begin{eqnarray*}
&&\left\vert \frac{\left( x-a\right) ^{\mu }+\left( b-x\right) ^{\mu }}{b-a}%
f\left( x\right) -\frac{\Gamma \left( \mu +1\right) }{b-a}\left[
J_{x^{-}}^{\mu }f\left( a\right) +J_{x^{+}}^{\mu }f\left( b\right) \right]
\right\vert \\
&\leq &M^{m}\left( \frac{1}{\mu +1}\right) ^{1-\frac{1}{q}}\left(
\int_{0}^{1}t^{\mu }M^{qt\alpha \left( 1-m\right) }dt\right) ^{\frac{1}{q}}%
\left[ \frac{\left( x-a\right) ^{\mu +1}+\left( b-x\right) ^{\mu +1}}{b-a}%
\right]
\end{eqnarray*}%
which comletes the proof.
\end{proof}

\begin{corollary}
Let $I\supset \left[ 0,\infty \right) $ be an open interval and $%
f:I\rightarrow \left( 0,\infty \right) $ is differentiable. If $f^{\prime
}\in L\left[ a,b\right] $ and $\left\vert f^{\prime }\right\vert ^{q}$ is
decreasing and $m$-geometrically convex on $\left[ \min \left\{ 1,a\right\}
,b\right] $ for $a\in \left[ 0,\infty \right) ,$ $b\geq 1,$ $q\geq 1$ and $%
\left\vert f^{\prime }\left( x\right) \right\vert \leq M<1,$ $x\in \left[
\min \left\{ 1,a\right\} ,b\right] $ and $m\in \left( 0,1\right) ,$ then the
following inequality for fractional integrals with $\mu >0$ holds:%
\begin{eqnarray*}
&&\left\vert \frac{\left( x-a\right) ^{\mu }+\left( b-x\right) ^{\mu }}{b-a}%
f\left( x\right) -\frac{\Gamma \left( \mu +1\right) }{b-a}\left[
J_{x^{-}}^{\mu }f\left( a\right) +J_{x^{+}}^{\mu }f\left( b\right) \right]
\right\vert \\
&\leq &M^{m}\left( \frac{1}{\mu +1}\right) ^{1-\frac{1}{q}}\left(
\int_{0}^{1}t^{\mu }M^{qt\left( 1-m\right) }dt\right) ^{\frac{1}{q}}\left[ 
\frac{\left( x-a\right) ^{\mu +1}+\left( b-x\right) ^{\mu +1}}{b-a}\right]
\end{eqnarray*}
\end{corollary}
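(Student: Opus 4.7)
The plan is to deduce this corollary directly from Theorem \ref{ttt}, since every hypothesis of the corollary is a specialization of the hypotheses of the theorem. Specifically, an $m$-geometrically convex function is precisely an $(\alpha,m)$-geometrically convex function with $\alpha=1$ (see the remark following Definition \ref{dd}), and all other assumptions on $f$, $a$, $b$, $q$, $M$, $m$ match exactly.

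Thus I would set $\alpha=1$ in inequality (\ref{xxx}) of Theorem \ref{ttt}. Under this substitution, the exponent $qt\alpha(1-m)$ inside the integrand on the right-hand side collapses to $qt(1-m)$, so
\begin{equation*}
\int_{0}^{1}t^{\mu}M^{qt\alpha(1-m)}\,dt \;=\; \int_{0}^{1}t^{\mu}M^{qt(1-m)}\,dt,
\end{equation*}
while the factors $M^{m}$, $\left(\frac{1}{\mu+1}\right)^{1-\frac{1}{q}}$, and $\left[\frac{(x-a)^{\mu+1}+(b-x)^{\mu+1}}{b-a}\right]$ are unchanged by the specialization. Substituting these directly into (\ref{xxx}) yields the asserted inequality.

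No delicate step is required here: the argument is a one-line specialization of the already-proved Theorem \ref{ttt}, with the only verification being that $\alpha=1$ is admissible (it is, since the theorem allows $\alpha\in(0,1]$). I do not anticipate any obstacle; the proof reduces to the single sentence \emph{``We take $\alpha=1$ in (\ref{xxx}), we get the required result.''}
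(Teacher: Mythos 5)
Your proposal is correct and coincides exactly with the paper's own proof, which likewise reads ``We take $\alpha =1$ in (\ref{xxx}), we get the required result.'' The specialization is legitimate since $\alpha =1$ is admissible in Theorem \ref{ttt} and $m$-geometric convexity is the $\alpha =1$ case of $(\alpha ,m)$-geometric convexity.
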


\begin{proof}
We take $\alpha =1$ in (\ref{xxx}), we get the required result.
\end{proof}

\begin{corollary}
Let $I\supset \left[ 0,\infty \right) $ be an open interval and $%
f:I\rightarrow \left( 0,\infty \right) $ is differentiable. If $f^{\prime
}\in L\left[ a,b\right] $ and $\left\vert f^{\prime }\right\vert ^{q}$ is
decreasing and geometrically convex on $\left[ a,b\right] $ for $a\in \left[
0,\infty \right) ,$ $b\geq 1,$ $q\geq 1$ and $\left\vert f^{\prime }\left(
x\right) \right\vert \leq M<1,$ $x\in \left[ a,b\right] ,$ then the
following inequality for fractional integrals with $\mu >0$ holds:%
\begin{eqnarray}
&&\left\vert \frac{\left( x-a\right) ^{\mu }+\left( b-x\right) ^{\mu }}{b-a}%
f\left( x\right) -\frac{\Gamma \left( \mu +1\right) }{b-a}\left[
J_{x^{-}}^{\mu }f\left( a\right) +J_{x^{+}}^{\mu }f\left( b\right) \right]
\right\vert  \label{set} \\
&\leq &M\left( \frac{1}{\mu +1}\right) \left[ \frac{\left( x-a\right) ^{\mu
+1}+\left( b-x\right) ^{\mu +1}}{b-a}\right]  \notag
\end{eqnarray}%
then, the inequality in (\ref{set}) is special version of Corollary 3 of 
\cite{a2}.
\end{corollary}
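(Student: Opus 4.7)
The proof will specialize Theorem \ref{ttt}---equivalently, redo its argument---in the case $\alpha=m=1$, since ordinary geometric convexity on $[a,b]$ coincides with $m$-geometric convexity for $m=1$, hence with $(\alpha,m)$-geometric convexity for $(\alpha,m)=(1,1)$. I would begin from Lemma \ref{ls}, pass to absolute values, and apply the power mean inequality with exponent $q\ge 1$ to each of the two integrals on the right-hand side, writing
\begin{equation*}
\int_{0}^{1}t^{\mu}|f'(tx+(1-t)a)|\,dt \le \left(\int_{0}^{1}t^{\mu}\,dt\right)^{1-\frac{1}{q}}\left(\int_{0}^{1}t^{\mu}|f'(tx+(1-t)a)|^{q}\,dt\right)^{\frac{1}{q}},
\end{equation*}
and analogously with $b$ in place of $a$.

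Next, since $|f'|^{q}$ is decreasing on $[\min\{1,a\},b]$ and the lemma of Xi \textit{et al.}\ gives $x^{t}a^{1-t}\le tx+(1-t)a$, I would replace the linear argument by the geometric-mean expression and invoke the geometric convexity of $|f'|^{q}$ together with the bound $|f'|\le M$:
\begin{equation*}
|f'(x^{t}a^{1-t})|^{q}\le |f'(x)|^{qt}|f'(a)|^{q(1-t)}\le M^{qt}M^{q(1-t)}=M^{q}.
\end{equation*}
The inner integrals then collapse to $M^{q}\int_{0}^{1}t^{\mu}\,dt=M^{q}/(\mu+1)$, and combining the two $(\mu+1)^{-1}$ factors arising from the power mean yields the overall multiplicative constant $M\bigl((\mu+1)^{-1}\bigr)^{1-1/q}\bigl((\mu+1)^{-1}\bigr)^{1/q}=M/(\mu+1)$, which, multiplied by $[(x-a)^{\mu+1}+(b-x)^{\mu+1}]/(b-a)$, is exactly (\ref{set}).

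The only point worth flagging is that Theorem \ref{ttt} is stated with $m\in(0,1)$ strictly, whereas the present corollary requires $m=1$; this is why I prefer to rerun the argument directly rather than simply substitute. No genuine analytic obstacle arises: at $m=1$ the exponent $qt\alpha(1-m)$ vanishes, the integral $\int_{0}^{1}t^{\mu}M^{0}\,dt$ reduces to $1/(\mu+1)$, and the $M^{m}=M$ prefactor delivers the claimed bound in (\ref{set}). The concluding remark that (\ref{set}) recovers Corollary~3 of \cite{a2} is then a direct comparison of constants.
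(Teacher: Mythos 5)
Your proposal is correct and follows essentially the same route as the paper, whose entire proof is to set $\alpha =1$ and let $m\rightarrow 1$ in (\ref{xxx}), under which $M^{qt\alpha \left( 1-m\right) }\rightarrow 1$, the inner integral becomes $1/\left( \mu +1\right) $, and the bound collapses to $M/\left( \mu +1\right) $ exactly as you compute. Your additional care in rerunning the argument at $m=1$ (because Theorem \ref{ttt} assumes $m\in \left( 0,1\right) $ strictly) is a sensible tightening of the same idea rather than a different proof.
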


\begin{proof}
If we take $\alpha =1$ and $m\rightarrow 1$ in (\ref{xxx}), we get the
required result.
\end{proof}

\begin{corollary}
In Theorem \ref{ttt}, if we choose $\mu =1,$ then (\ref{xxx}) reduces
inequality above%
\begin{eqnarray*}
&&\left\vert f\left( x\right) -\frac{1}{b-a}\int_{a}^{b}f\left( y\right)
dy\right\vert \\
&\leq &M^{m}2^{\frac{1}{q}}\left( \frac{M^{q\alpha \left( 1-m\right) }-1}{%
\ln M^{q\alpha \left( 1-m\right) }}\left( 1-\frac{1}{\ln M^{q\alpha \left(
1-m\right) }}\right) \right) ^{\frac{1}{q}}\left[ \frac{\left( x-a\right)
^{2}+\left( b-x\right) ^{2}}{2\left( b-a\right) }\right]
\end{eqnarray*}
\end{corollary}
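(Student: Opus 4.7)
The plan is to specialize Theorem \ref{ttt} to $\mu=1$ and carry out routine simplifications; the substantial work has already been done in Theorem \ref{ttt} itself, so what remains is essentially bookkeeping.

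First I would simplify the left-hand side of (\ref{xxx}) at $\mu=1$: the coefficient $((x-a)^{\mu}+(b-x)^{\mu})/(b-a)$ collapses to $1$, $\Gamma(2)=1$, and the two fractional integrals reduce to ordinary ones, $J_{x^{-}}^{1}f(a)=\int_{a}^{x}f(t)\,dt$ and $J_{x^{+}}^{1}f(b)=\int_{x}^{b}f(t)\,dt$, whose sum is $\int_{a}^{b}f(y)\,dy$. Hence the LHS becomes exactly $\bigl|f(x)-\tfrac{1}{b-a}\int_{a}^{b}f(y)\,dy\bigr|$. On the right-hand side, the factor $(x-a)^{\mu+1}+(b-x)^{\mu+1}$ becomes $(x-a)^{2}+(b-x)^{2}$, and the prefactor rearranges as $\bigl(1/(\mu+1)\bigr)^{1-1/q}=(1/2)^{1-1/q}=2^{1/q}/2$, producing the structural piece $2^{1/q}\cdot\tfrac{(x-a)^{2}+(b-x)^{2}}{2(b-a)}$ of the claimed bound.

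The only genuine computation is the integral $\int_{0}^{1}t\,M^{qt\alpha(1-m)}\,dt$. Setting $c:=q\alpha(1-m)\ln M$, which is strictly negative since $0<M<1$ and $0<m<1$, reduces it to $\int_{0}^{1}t\,e^{ct}\,dt$, and integration by parts yields
\[
\int_{0}^{1}t\,e^{ct}\,dt=\frac{e^{c}}{c}-\frac{e^{c}-1}{c^{2}}=\frac{(e^{c}-1)(c-1)}{c^{2}}+\frac{1}{c}.
\]
Substituting back $e^{c}=M^{q\alpha(1-m)}$ and $c=\ln M^{q\alpha(1-m)}$ identifies the first summand with the factor appearing inside the $1/q$-th power in the claimed inequality. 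Raising to the power $1/q$ and reassembling all pieces completes the proof.

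The main obstacle I expect is a minor subtlety: the bracketed expression in the stated inequality is not literally the value of the integral but a slightly larger upper bound. One must observe that the leftover term $1/c$ is nonpositive (again because $c<0$), so the integral is dominated by $\tfrac{e^{c}-1}{c}\bigl(1-\tfrac{1}{c}\bigr)$, and since $1/q>0$, raising this larger quantity to that power preserves the inequality. Once this sign check is recorded, the argument reduces to algebraic substitution.
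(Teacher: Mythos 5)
Your proof is correct and follows the only natural route: substitute $\mu=1$ into (\ref{xxx}), identify $J_{x^{-}}^{1}f(a)+J_{x^{+}}^{1}f(b)=\int_{a}^{b}f(y)\,dy$ and $\left(1/2\right)^{1-1/q}=2^{1/q}/2$, and handle the integral $\int_{0}^{1}t\,M^{qt\alpha(1-m)}\,dt$. The paper states this corollary with no proof at all, and your sign check supplies the one genuinely nontrivial point: with $c=q\alpha(1-m)\ln M<0$ the integral equals $\frac{(e^{c}-1)(c-1)}{c^{2}}+\frac{1}{c}$, which is strictly \emph{less} than the bracketed quantity $\frac{M^{q\alpha(1-m)}-1}{\ln M^{q\alpha(1-m)}}\bigl(1-\frac{1}{\ln M^{q\alpha(1-m)}}\bigr)$ in the statement (they differ by $-1/c>0$), so the corollary is valid as an inequality even though that bracket is not the exact value of the integral --- your observation that $1/c\leq 0$, together with the positivity of the bracket and monotonicity of $t\mapsto t^{1/q}$, is precisely the justification the paper leaves implicit.
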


\begin{corollary}
Let $f,g,a,b,\mu ,q$\textit{\ be as in Theorem \ref{ttt}, and }$u,v>0$ with $%
u+v=1.$Then%
\begin{eqnarray}
&&\left\vert \frac{\left( x-a\right) ^{\mu }+\left( b-x\right) ^{\mu }}{b-a}%
f\left( x\right) -\frac{\Gamma \left( \mu +1\right) }{b-a}\left[
J_{x^{-}}^{\mu }f\left( a\right) +J_{x^{+}}^{\mu }f\left( b\right) \right]
\right\vert  \label{mm} \\
&\leq &M^{m}\left[ \frac{\left( x-a\right) ^{\mu +1}+\left( b-x\right) ^{\mu
+1}}{b-a}\right]  \notag \\
&&\times \left( \frac{1}{\mu +1}\right) ^{1-\frac{1}{q}}\left( \frac{u^{2}}{%
\mu +u}+\frac{v^{2}\left( M^{^{\frac{q\alpha \left( 1-m\right) }{v}%
}}-1\right) }{q\alpha \left( 1-m\right) \ln M}\right) ^{\frac{1}{q}}  \notag
\end{eqnarray}
\end{corollary}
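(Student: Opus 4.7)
The plan is to follow the proof of Theorem \ref{ttt} verbatim up to its penultimate step, and then to replace the direct treatment of $\int_{0}^{1} t^\mu M^{qt\alpha(1-m)}\,dt$ by a weighted AM--GM split governed by the new parameters $u$ and $v$.

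First, I would invoke Lemma \ref{ls}, the triangle inequality, and the power-mean inequality exactly as in Theorem \ref{ttt}. This reduces the task to estimating the two integrals $\int_{0}^{1} t^\mu \lvert f'(x^t a^{m(1-t)})\rvert^q\,dt$ and $\int_{0}^{1} t^\mu \lvert f'(x^t b^{m(1-t)})\rvert^q\,dt$. Using the $(\alpha,m)$-geometric convexity of $\lvert f'\rvert^q$, the assumption $\lvert f'(x)\rvert\le M<1$, and inequality (\ref{xx}), each of these is bounded above by $M^{mq}\int_{0}^{1} t^\mu M^{qt\alpha(1-m)}\,dt$, exactly as before.

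The new ingredient is the handling of this last integral. The shape of the right-hand side of (\ref{mm}) suggests applying Young's inequality (weighted AM--GM) $A^u B^v\le uA+vB$, valid for $A,B\ge 0$ and $u+v=1$, pointwise in $t$ with $A=t^{\mu/u}$ and $B=M^{q\alpha(1-m)t/v}$. Since $(t^{\mu/u})^u(M^{q\alpha(1-m)t/v})^v=t^\mu M^{q\alpha(1-m)t}$, this yields
\begin{equation*}
t^\mu M^{q\alpha(1-m)t}\;\le\; u\,t^{\mu/u}+v\,M^{q\alpha(1-m)t/v},
\end{equation*}
and a one-line integration over $[0,1]$ gives
\begin{equation*}
\int_{0}^{1} t^\mu M^{q\alpha(1-m)t}\,dt \;\le\; \frac{u^{2}}{\mu+u}+\frac{v^{2}\bigl(M^{q\alpha(1-m)/v}-1\bigr)}{q\alpha(1-m)\ln M}.
\end{equation*}

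Plugging this estimate into the final display of the proof of Theorem \ref{ttt} (the one carrying the factor $(\int_{0}^{1} t^\mu M^{qt\alpha(1-m)}\,dt)^{1/q}$) immediately produces the stated bound (\ref{mm}). The only genuine obstacle is to guess the correct decomposition; once one observes that the exponents $t^{\mu/u}$ and $M^{q\alpha(1-m)/v}$ in the target bound essentially force the choice of $A$ and $B$ above, the rest of the argument is elementary integration and substitution.
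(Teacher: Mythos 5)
Your proposal is correct and coincides with the paper's own argument: the paper also runs the proof of Theorem \ref{ttt} up to the integral $\int_{0}^{1}t^{\mu }M^{qt\alpha \left( 1-m\right) }dt$ and then applies the inequality $cd\leq uc^{1/u}+vd^{1/v}$, which is exactly your weighted AM--GM step $A^{u}B^{v}\leq uA+vB$ with $c=A^{u}=t^{\mu }$ and $d=B^{v}=M^{qt\alpha \left( 1-m\right) }$. The subsequent integration and substitution are identical.
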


\begin{proof}
By Lemma \ref{ls} and since $\left\vert f^{\prime }\right\vert ^{q}$ is
decreasing, and using the power mean inequality, we have%
\begin{eqnarray*}
&&\left\vert \frac{\left( x-a\right) ^{\mu }+\left( b-x\right) ^{\mu }}{b-a}%
f\left( x\right) -\frac{\Gamma \left( \mu +1\right) }{b-a}\left[
J_{x^{-}}^{\mu }f\left( a\right) +J_{x^{+}}^{\mu }f\left( b\right) \right]
\right\vert \\
&\leq &\frac{\left( x-a\right) ^{\mu +1}}{b-a}\left( \int_{0}^{1}t^{\mu
}dt\right) ^{1-\frac{1}{q}}\left( \int_{0}^{1}t^{\mu }\left\vert f^{\prime
}\left( x^{t}a^{m\left( 1-t\right) }\right) \right\vert ^{q}dt\right) ^{%
\frac{1}{q}} \\
&&+\frac{\left( b-x\right) ^{\mu +1}}{b-a}\left( \int_{0}^{1}t^{\mu
}dt\right) ^{1-\frac{1}{q}}\left( \int_{0}^{1}t^{\mu }\left\vert f^{\prime
}\left( x^{t}b^{m\left( 1-t\right) }\right) \right\vert ^{q}dt\right) ^{%
\frac{1}{q}}
\end{eqnarray*}%
Since $\left\vert f^{\prime }\right\vert ^{q}$ is $\left( \alpha ,m\right) $%
-geometrically convex and $\left\vert f^{\prime }\left( x\right) \right\vert
\leq M<1$ and by (\ref{xx}), we obtain 
\begin{equation*}
\int_{0}^{1}t^{\mu }\left\vert f^{\prime }\left( x^{t}a^{m\left( 1-t\right)
}\right) \right\vert ^{q}dt\leq M^{mq}\int_{0}^{1}t^{\mu }M^{qt\alpha \left(
1-m\right) }dt
\end{equation*}%
\begin{equation*}
\int_{0}^{1}t^{\mu }\left\vert f^{\prime }\left( x^{t}b^{m\left( 1-t\right)
}\right) \right\vert ^{q}dt\leq M^{mq}\int_{0}^{1}t^{\mu }M^{qt\alpha \left(
1-m\right) }dt
\end{equation*}%
By using the well known inequality $cd\leq uc^{\frac{1}{u}}+vd^{\frac{1}{v}}$%
, we get that%
\begin{eqnarray*}
\int_{0}^{1}t^{\mu }M^{qt\alpha \left( 1-m\right) }dt &\leq
&\int_{0}^{1}\left( ut^{\frac{^{\mu }}{u}}+vM^{^{\frac{qt\alpha \left(
1-m\right) }{v}}}\right) dt \\
&=&\frac{u}{\frac{^{\mu }}{u}+1}+v\frac{M^{^{\frac{q\alpha \left( 1-m\right) 
}{v}}}-1}{\ln M^{^{\frac{q\alpha \left( 1-m\right) }{v}}}} \\
&=&\frac{u^{2}}{\mu +u}+\frac{v^{2}\left( M^{^{\frac{q\alpha \left(
1-m\right) }{v}}}-1\right) }{q\alpha \left( 1-m\right) \ln M}
\end{eqnarray*}%
Hence, we have%
\begin{eqnarray*}
&&\left\vert \frac{\left( x-a\right) ^{\mu }+\left( b-x\right) ^{\mu }}{b-a}%
f\left( x\right) -\frac{\Gamma \left( \mu +1\right) }{b-a}\left[
J_{x^{-}}^{\mu }f\left( a\right) +J_{x^{+}}^{\mu }f\left( b\right) \right]
\right\vert \\
&\leq &M^{m}\left( \frac{1}{\mu +1}\right) ^{1-\frac{1}{q}}\left( \frac{u^{2}%
}{\mu +u}+\frac{v^{2}\left( M^{^{\frac{q\alpha \left( 1-m\right) }{v}%
}}-1\right) }{q\alpha \left( 1-m\right) \ln M}\right) ^{\frac{1}{q}}\left[ 
\frac{\left( x-a\right) ^{\mu +1}+\left( b-x\right) ^{\mu +1}}{b-a}\right]
\end{eqnarray*}%
which comletes the proof.
\end{proof}

\begin{remark}
In \ref{mm}, if we choose $q=1,$ then (\ref{mm}) reduces inequality above%
\begin{eqnarray*}
&&\left\vert \frac{\left( x-a\right) ^{\mu }+\left( b-x\right) ^{\mu }}{b-a}%
f\left( x\right) -\frac{\Gamma \left( \mu +1\right) }{b-a}\left[
J_{x^{-}}^{\mu }f\left( a\right) +J_{x^{+}}^{\mu }f\left( b\right) \right]
\right\vert \\
&\leq &M^{m}\left[ \frac{\left( x-a\right) ^{\mu +1}+\left( b-x\right) ^{\mu
+1}}{b-a}\right] \left( \frac{u^{2}}{\mu +u}+\frac{v^{2}\left( M^{^{\frac{%
\alpha \left( 1-m\right) }{v}}}-1\right) }{\alpha \left( 1-m\right) \ln M}%
\right)
\end{eqnarray*}
\end{remark}

\end{document}